\newtheorem{alphtheorem}{Theorem}
\newtheorem{lemma}{Lemma}
\newtheorem{theorem}{Theorem}
\newtheorem{corollary}{Corollary}
\def\I{\mathcal{I}}
\def\P{\mathbb{P}}
\def\S{\mathcal{S}}
\def\HH{\mathcal{H}}
\def\A{\mathcal{A}}
\def\C{\mathcal{C}}
\def\P{\mathbb{P}}
\def\KG{\operatorname{KG}}
\title{On the random version of the Erd{\H o}s matching conjecture}
\keywords{Erd{\H o}s matching conjecture, random Kneser hypergraphs, independence number. }
\author{Meysam Alishahi}
\address{M. Alishahi, 
Faculty of Mathematical Sciences,
Shahrood University of Technology, Shahrood, Iran \&
School of Mathematics, Institute for Research in Fundamental Sciences (IPM), P.O. Box 19395-5746, Tehran, Iran}
\email{meysam\_alishahi@shahroodut.ac.ir}
\author{Ali Taherkhani}
\address{A. Taherkhani, 
Department of Mathematics, Institute for Advanced Studies in Basic Sciences (IASBS), Zanjan 45137-66731, Iran}
\email{ali.taherkhani@iasbs.ac.ir}
\begin{document}
\maketitle 

\begin{abstract}
The Kneser hypergraph $\KG^r_{n,k}$ is an $r$-uniform hypergraph with vertex set consisting of all $k$-subsets of $\{1,\ldots,n\}$ and 
any collection of $r$ vertices forms an edge if their corresponding $k$-sets are pairwise disjoint. 
The random Kneser hypergraph $\KG^r_{n,k}(p)$ is a spanning subhypergraph of $\KG^r_{n,k}$ in which each edge of $\KG^r_{n,k}$ is retained independently of each other 
with probability $p$. The independence number of random subgraphs of $\KG^2_{n,k}$ was recently addressed in a series of works  by Bollob{\'a}s, Narayanan, and Raigorodskii~(2016), 
Balogh, Bollob{\'a}s, and Narayanan~(2015), Das and Tran~(2016), and Devlin and Kahn~(2016). It was proved that the random counterpart of the Erd{\H o}s-Ko-Rado theorem 
continues to be valid even for very small values of $p$. In this paper, generalizing this result, we will investigate the independence number of  random Kneser hypergraphs $\KG^r_{n,k}(p)$. Broadly speaking, when $k$ is much smaller that $n$, we will prove that the random analogue of the Erd{\H o}s matching conjecture is true 
even for extremely small values of $p$. \\ 
\end{abstract}

\section{Motivations and Main Results}
Let $n,k$ and $r$ be three positive integers such that $n\geq 2k$ and $r\geq 2$. Throughout the paper, the two symbols $[n]$ and ${[n]\choose k}$ respectively stand for the sets 
$\left\{1,\ldots,n\right\}$ and $\left\{A\subseteq[n]\colon |A|=k\right\}$.  The  {\it Kneser hypergraph $\KG^{r}_{n,k}$} 
is an $r$-uniform hypergraph whose vertex set is ${[n]\choose k}$ and its edge set consists of all 
pairwise  disjoint $r$-tuples  of  elements in ${[n]\choose k}$, i.e., 
$$E(\KG^r_{n,k})=\Big\{\{A_1,\ldots,A_r\}\colon A_1,\ldots,A_r\in {[n]\choose k} \text{ are pairwise  disjoint}\Big\}.$$ 
For each $x\in[n]$, the set $\S_x=\left\{A\in {[n]\choose k}\colon x\in A\right\}$ is called a {\it star}. It is clear that any star is an independent set of 
$\KG^{2}_{n,k}$, that is, a set of vertices containing no edge. 
 We remind the reader that the maximum size of an independent set in a hypergraph $\HH$ is called the {\it   independence number of} $\HH$, denoted by $\alpha(\HH)$. 
The seminal Erd{\H o}s-Ko-Rado theorem states that  for $n\geq 2k$, the independence number  of $\KG^2_{n,k}$ is ${n-1 \choose k-1}$;
 furthermore   if  $n>2k$, the only independent sets of this size are  the stars. 
As an extension of the Erd{\H o}s-Ko-Rado theorem, Erd{\H o}s~\cite{Erdos65} conjectured  
that $\alpha(\KG^r_{n,k})=\max \left\{{rk-1\choose k},{n\choose k}-{n-r+1\choose k}\right\}$ provided that $n\geq rk-1$.  
Easy computation shows that for $n\geq r(k+{1\over 2})$, the aforementioned maximum is ${n\choose k}-{n-r+1\choose k}$. 
In recent years, this conjecture  has received significant attention
and several papers were devoted to the study of this conjecture; see, e.g.,~\cite{BolDayErd76,Erdos65, ErdGal59,Frankl13,Frankl2017, FranLucMie12,HuaLohSud12,LucMie14}. 
Regarding this conjecture, the best known result is proved by 
Frankl~\cite{Frankl13}. Provided $n\geq (2r-1)k-r+1$, he proved that $\alpha(\KG^r_{n,k})={n\choose k}-{n-r+1\choose k}$; furthermore, 
any independent set of this size is formed by the union of some $r-1$ distinct stars which confirms the conjecture in this range. 
For more recent results concerning this conjecture, one can refer to~\cite{FK16,FK17}. 
It is worth noting  that there is another interesting extension of the Erd{\H o}s-Ko-Rado theorem due to Hilton and Milner~\cite{HM} asserting that for $n>2k$, any independent set of Kneser graph $KG_{n,k}$ which is contained in no star has cardinality at most
${n-1\choose k-1}-{n-k-1\choose k-1}+1$.  For recent results, one can see~\cite{F18, KZ18}.

Let $\KG^{r}_{n,k}(p)$ be the  random subhypergraph of $\KG^{r}_{n,k}$  whose 
vertex set is the same as $\KG^{r}_{n,k}$  and each edge of $\KG^{r}_{n,k}$ is retained independently of each other with probability $p$. 
Throughout the paper, when $r=2$, we shall drop the super-index $r$ and write $\KG_{n,k}$ and $\KG_{n,k}(p)$ instead of $\KG^r_{n,k}$ and $\KG^r_{n,k}(p)$, respectively. 
Also, we say an event occurs {\it with high probability} or {\it likely} happens if  
it can be made as close as desired to $1$ by making $n$ large enough. 

As a fast growing branch of hypergraph theory, many articles are recently devoted to investigating the properties of random Kneser hypergraphs $\KG^r_{n,k}(p)$; see~\cite{AH2018,MR3482268,BGPR14, BGPR15,MR3403515,MR3504983,MR3513857,KR17,K16, K17, P15, P17, PR16,R17}. 
Extending some results in~\cite{BGPR14,BGPR15}, Bollob{\'a}s, Narayanan and Raigorodskii \cite{MR3403515}  studied the independence number of random Kneser graphs $\KG_{n,k}(p)$.
They tried to answer the question that 
for which $p$, the Erd{\H o}s-Ko-Rado theorem is likely valid in $\KG_{n,k}(p)$. Surprisingly,  when $k$ is much smaller than $n$, they proved that 
an analogue of the Erd{\H o}s-Ko-Rado theorem continues to hold even after deleting practically all the edges of the Kneser graphs.
\begin{alphtheorem}\cite[Theorem~1.2]{MR3403515}\label{bela1}
Fix a real number $\varepsilon>0$ and let $k=k(n)$ be a natural number such that $2\leq k=o(n^{1/3})$. Then as $n\to \infty$, 
$$
\mathbb{P}\left(\alpha\left(\KG_{n,k}(p)\right)={n-1\choose k-1}\right)\longrightarrow
\left\{\begin{array}{lll}
               1        &\quad       & p\geq (1+\varepsilon){(r+1)\ln n-r\ln r\over {n-1\choose k-1}}\\ \\
               0        &\quad      & p\leq (1-\varepsilon){(r+1)\ln n-r\ln r\over {n-1\choose k-1}}.
\end{array}\right.
$$ 

Furthermore, when $p\geq (1+\varepsilon){(r+1)\ln n-r\ln r\over {n-1\choose k-1}}$, 
with high probability, the only independent sets of size  ${n-1\choose k-1}$  in $\KG_{n,k}(p)$ are the stars.
\end{alphtheorem}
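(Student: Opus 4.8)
The plan is to work from both sides of the threshold at once, through the random variable $X$ that counts the pairs $(x,A)$ with $x\in[n]$, $A\in\binom{[n]}{k}$, $x\notin A$, and $A$ \emph{isolated from the star} $\S_x$ in $\KG_{n,k}(p)$ — meaning that none of the $\binom{n-k-1}{k-1}$ edges of $\KG_{n,k}$ joining $A$ to members of $\S_x$ is retained. Then $\E X=n\binom{n-1}{k}(1-p)^{\binom{n-k-1}{k-1}}$, and the hypothesis $k=o(n^{1/3})$ — which forces $\binom{n-k-1}{k-1}=(1+o(1))\binom{n-1}{k-1}$ and keeps the relevant Stirling estimates in line — yields $\E X\to\infty$ below the stated threshold and $\E X\to0$ above it; here $(k+1)\ln n-k\ln k$, which is asymptotic to $\ln\!\big(n\binom{n}{k}\big)$, is exactly the logarithm of the number of candidate pairs. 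If $p$ lies below the threshold, a second-moment computation (the events for $(x,A)$ and $(x',A')$ share edges only through the few sets containing all relevant elements and disjoint from $A$ and $A'$, so $\operatorname{Var}X=o\big((\E X)^2\big)$) gives $X>0$ with high probability, and any witnessing pair yields the independent set $\S_x\cup\{A\}$ of size $\binom{n-1}{k-1}+1$; this is the $0$-statement. If $p$ lies above the threshold, $\E X\to0$ shows that with high probability no $k$-set is isolated from any star; together with the assertion that every independent set of size exactly $\binom{n-1}{k-1}$ is a star, this forces $\alpha\big(\KG_{n,k}(p)\big)=\binom{n-1}{k-1}$ and that the only extremal sets are stars, so the $1$-statement reduces to proving that with high probability every independent set of size exactly $\binom{n-1}{k-1}$ is a star.

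For that reduction I would write a hypothetical non-star independent set of size $\binom{n-1}{k-1}$ as $I=(\S_x\setminus D)\cup U$, where $\S_x$ is a nearest star and $|D|=|U|=s\ge1$ is its edit distance to a star, and split on $s$. If $s$ is a constant fraction of $\binom{n-1}{k-1}$, then $I$ is far from every star and a supersaturation/stability version of the Erd\H{o}s--Ko--Rado theorem forces $I$ to contain so many disjoint pairs that the survival probability $(1-p)^{e_{\KG_{n,k}}(I)}$ absorbs the (carefully counted) number of such families. If $s$ is small, then each $A\in U$ is disjoint from at least $\binom{n-k-1}{k-1}-s=(1+o(1))\binom{n-1}{k-1}$ members of $\S_x\setminus D$, so $I$ contains only about $s\binom{n-1}{k-1}$ disjoint pairs, while the number of candidate configurations $(U,D)$ is as large as $\binom{n}{k}^{2s}$; a naive union bound against $(1-p)^{s\binom{n-1}{k-1}}$ then closes only for large $\varepsilon$, not for every $\varepsilon>0$.

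The resolution, and the crux of the argument, is to use the randomness of $\KG_{n,k}(p)$ a second time. A first-moment computation — once more governed by the exact constant $(k+1)\ln n-k\ln k$, which is where the precise threshold becomes unavoidable — shows that above the threshold, with high probability \emph{every} $k$-set $A$ avoiding a given $x$ has at least $j_0$ retained edges into $\S_x$, with $j_0\to\infty$. Writing $R_A$ for the set of members of $\S_x$ joined to $A$ by a retained edge, independence of $I=(\S_x\setminus D)\cup U$ forces $R_A\subseteq D$ for every $A\in U$; thus the $s$ sets $R_A$, each of size at least $j_0$, must fit inside the $s$-element set $D$. This rules out $s<j_0$ outright, and in general it says that the bipartite ``retained'' graph between the $k$-sets missing $x$ and the members of $\S_x$ has at least $sj_0$ edges inside an $s\times s$ box; a binomial tail bound for that event, followed by a union bound over the box, disposes of a polynomial range of $s$, and the intermediate range of $s$ is then handled by the same supersaturation estimate. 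The main obstacle is precisely this bookkeeping — choosing $j_0$ and partitioning the range of $s$ so that the count of configurations, the forced ``hidden'' edges inside $D$, and the binomial tail all balance at $p=(1+\varepsilon)p_0$ for every fixed $\varepsilon>0$ — and it is exactly this balance that pins down the stated form of the threshold.
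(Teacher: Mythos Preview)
This statement is Theorem~A, which the paper \emph{cites} from Bollob\'as, Narayanan, and Raigorodskii~\cite{MR3403515}; it is not proved here. The present paper only proves the generalization Theorem~\ref{maintheorem}, and explicitly remarks that for $r=2$ this yields a \emph{slightly weaker} version of Theorem~A: Part~I of Theorem~\ref{maintheorem} gives the $1$-statement only for $p>\zeta p_c$ with some unspecified constant $\zeta=\zeta(r)$, not for $p\ge(1+\varepsilon)p_c$ with arbitrary $\varepsilon>0$. So there is no proof of Theorem~A in this paper to compare your attempt against.

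That said, your sketch follows what is in fact the approach of~\cite{MR3403515}, and it is worth contrasting it with the method of this paper (specialized to $r=2$). Your $0$-statement argument via the second moment on pairs $(x,A)$ is essentially identical to this paper's Part~II proof (Section~\ref{PartII}), which the authors themselves say mirrors~\cite{MR3403515}. The divergence is in the $1$-statement. You decompose a non-star independent set as $(\S_x\setminus D)\cup U$ and, for small edit distance $s$, exploit the randomness a second time via the ``retained neighbourhoods'' $R_A\subseteq D$ to force $s\ge j_0$; this double use of randomness is exactly what buys the sharp constant $1+\varepsilon$. The present paper instead partitions the collection $\C$ of non-trivial size-$N$ families into $\C_1,\C_2,\C_3$ according to the sizes $|\A_{x_i}|$ and the defects $z_i$, and bounds $|E(\KG_{n,k}[\A])|$ deterministically in each class (Lemma~\ref{edgelower1}, Corollary~\ref{DasCor} via the Das--Gan--Sudakov minimization Theorem~\ref{Dasthm}, and the $\C_3$ computation). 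This avoids the delicate $j_0$/edit-distance bookkeeping and generalizes cleanly to $r\ge3$, but the union bounds only close for $\zeta$ exceeding $\max\{\zeta_0,\eta_1^{-1},\eta_2^{-1},(r+1)/c\}$, so the sharp threshold is lost. Your approach is the one needed for the full strength of Theorem~A; the paper's approach trades that sharpness for a structure that extends to hypergraphs.
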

In addition, they conjectured that a similar result should hold for $k=o(n)$ which first was partially answered by Balogh, Bollob{\'a}s and Narayanan~\cite{MR3482268}.
Then, a significantly sharper result was proved by Das  and Tran~\cite{MR3504983}. They extended  the Bollob{\'a}s-Narayanan-Raigorodskii theorem to $k$ as large as linear in $n$ 
subsuming the earlier results. Finally,  Delvin and Kahn~\cite{MR3513857} extended this theorem to general $k$ with $n\geq 2k+2$.  Also, for $n=2k+1$, they proved that 
there is a fixed $p<1$ such that, with high probability, $\alpha(\KG_{2k+1,k}(p))={2k\choose k-1}$ and the stars are the only maximum independent sets. 
It is worth mentioning that some other kinds of generalizations of Theorem~\ref{bela1} can be found in~\cite{P15, P17, PR16, R17}. 

Seeing the Erd{\H o}s matching conjecture as a generalization of the Erd{\H o}s-Ko-Rado theorem to the case of Kneser hypergraphs, 
one may naturally ask for which $p>0$ the Erd{\H o}s matching conjecture continues likely to hold in $\KG^r_{n,k}(p)$. 
Mainly motivated by this question, in this paper, 
we shall investigate the  size and structure of maximum independent sets in  random Kneser hypergraphs. We will show that 
the random counterpart of the Erd{\H o}s matching conjecture continues to hold when $k$ is very small in comparison to $n$.
More precisely, when $r\geq 2$, we shall prove a hypergraph version of Theorem ~\ref{bela1} which in part implies a slightly weaker version of this theorem.
It should be mentioned that our technique in the proof of this result is different from that of Theorem~\ref{bela1} in~\cite{MR3403515}. 
A natural candidate for the probability threshold could be obtained by seeking for a threshold $p_c$ such that 
for each positive constant $\varepsilon$, if $p\leq (1-\varepsilon)p_c$, then the expected number of 
independent sets $\A$ in $\KG^r_{n,k}(p)$ of size ${n\choose k}-{n\choose n-r+1}+1$ which contain some $r-1$ distinct stars
 goes to zero as $n$ tends to infinity. 
Since for any such family $\A$, we have $|E(\KG^r_{n,k}[\A])|=\prod\limits_{i=1}^{r-1}{n-ik-(r-i)\choose k-1}$, 
the expected number of such independent sets would be 
$${n\choose r-1}{n-r+1\choose k}(1-p)^{\prod\limits_{i=1}^{r-1}{n-ik-(r-i)\choose k-1}}$$
which clearly suggests  $p_c={\ln\left({n\choose r-1}{n-r+1\choose k}\right)\over \prod\limits_{i=1}^{r-1}{n-ik-(r-i)\choose k-1}}$. 
Our main result is the following theorem. 
\begin{theorem}\label{maintheorem}
Let $n, k$ and $r$ be positive integers such that $k=k(n)\geq 2$, $r\geq 2$, and $n\geq r(k+{1\over 2})$. 
\begin{description}
\item[I] There are positive constants $\zeta=\zeta(r)$ and $C=C(r)$ such that 
$$\mathbb{P}\left(\alpha\left(\KG^r_{n,k}(p)\right)={n\choose k}-{n-r+1\choose k}\right)\rightarrow 1$$ 
provided $p> \zeta p_c$ and $k\leq C n^{1\over 7}$ {\rm(}$k=o(n^{1\over 3})$ for $r=2,3${\rm)}. 

Furthermore, with high probability, the only independent sets of size  ${n\choose k}-{n-r+1\choose k}$ are the trivial ones, namely the union of $r-1 $ distinct stars.   \\

\item[II] For each $\varepsilon \in(0,1]$, we have 
$$\mathbb{P}\left(\alpha\left(\KG^r_{n,k}(p)\right)={n\choose k}-{n-r+1\choose k}\right)\rightarrow0$$
provided $p\leq  (1-\varepsilon){p_c}$.
\end{description}
\end{theorem}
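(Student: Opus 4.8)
The plan is to prove the two parts by rather different means. For Part~II (the ``$0$-statement''), the strategy is the first-moment computation already sketched in the introduction, refined so that it actually yields convergence to zero rather than just a heuristic threshold. One counts, over all choices of $r-1$ distinct stars and all choices of the extra vertex, the probability that the resulting family of size $\binom{n}{k}-\binom{n-r+1}{k}+1$ is independent; since such a family induces exactly $\prod_{i=1}^{r-1}\binom{n-ik-(r-i)}{k-1}$ edges, this expectation is $\binom{n}{r-1}\binom{n-r+1}{k}(1-p)^{\prod_{i=1}^{r-1}\binom{n-ik-(r-i)}{k-1}}$. However, one must also verify that for $n\ge r(k+\tfrac12)$ every independent set of the extremal size \emph{other than} a union of $r-1$ stars would actually have to sit above such a ``star-plus-one-vertex'' configuration in a way the first moment still controls; here the Frankl/Bollob\'as--Day--Erd\H os structural input on $\KG^r_{n,k}$ is used to say that any independent set of size $\binom nk-\binom{n-r+1}{k}$ in the \emph{complete} Kneser hypergraph is a union of $r-1$ stars, so in $\KG^r_{n,k}(p)$ an independent set of this size must be a union of $r-1$ stars \emph{plus} the event that all of its induced edges were deleted. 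Taking logarithms and using $1-p\le e^{-p}$ together with $p\le (1-\varepsilon)p_c$ and the definition of $p_c$ then gives that this expectation is $n^{-\Omega(\varepsilon)}\to 0$, so with high probability no such set is independent, i.e.\ $\alpha<\binom nk-\binom{n-r+1}{k}$... but wait, one wants $\alpha \ne \binom nk - \binom{n-r+1}{k}$, and since $\alpha$ is always \emph{at least} $\binom nk-\binom{n-r+1}{k}$ (the union of $r-1$ stars is always independent), the correct reading is that the $0$-statement asserts $\alpha$ is \emph{strictly larger} with high probability; the first-moment bound must therefore be applied to the \emph{complement} event, namely to families of size $\binom nk - \binom{n-r+1}{k}+1$, exactly as above.

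For Part~I (the ``$1$-statement'' plus uniqueness) the plan is the more substantial one, and this is where the stated restriction $k\le Cn^{1/7}$ (resp.\ $k=o(n^{1/3})$ for $r\in\{2,3\}$) enters. One must show that once $p>\zeta p_c$, with high probability (a) $\alpha\le\binom nk-\binom{n-r+1}{k}$, and (b) the only independent sets attaining this bound are unions of $r-1$ stars. The natural route is a supersaturation-and-container style argument: first establish a stability statement for $\KG^r_{n,k}$ itself --- any independent set of size close to the extremal value either is a union of $r-1$ stars or induces \emph{many} edges in $\KG^r_{n,k}$ (a supersaturation lower bound on $|E(\KG^r_{n,k}[\A])|$ growing with the distance from extremal configurations) --- and then feed this into a union bound over the (relatively few) ``non-trivial'' candidate independent sets $\A$, each of which survives in $\KG^r_{n,k}(p)$ only if all its induced edges are deleted, an event of probability $(1-p)^{|E(\KG^r_{n,k}[\A])|}$. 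To make the union bound converge one needs the number of candidate sets at each ``distance'' from the extremal family to be dominated by the supersaturation gain in the exponent; since there are at most $\binom{\binom nk}{t}$ sets differing from a fixed one in $t$ vertices, and $\binom nk\le n^k$, the $k\le Cn^{1/7}$ hypothesis is exactly what keeps $\log\binom{\binom nk}{t}$ (which is of order $tk\log n$) below the supersaturation term $t\cdot p\cdot(\text{typical codegree})$.

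The key codegree estimate one needs is a lower bound of the form: for a family $\A$ which is a union of $r-1$ stars together with one extra vertex $A\notin\bigcup\S_{x_i}$, the number of edges through $A$ inside $\A$ is $\prod_{i=1}^{r-1}\binom{n-ik-(r-i)}{k-1}$, which is $\Theta\!\big(\tfrac{n^{(r-1)(k-1)}}{((k-1)!)^{r-1}}\big)$, and more generally that adding any vertex outside the stars, or moving away from the star structure, costs at least one ``fresh'' copy of this many edges. Combined with a local-lemma-free second stage --- remove one vertex at a time from a hypothetical large independent set and track how many induced edges appear --- this yields the strict bound $\alpha\le\binom nk-\binom{n-r+1}{k}$. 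The uniqueness clause then follows because any extremal $\A$ which is \emph{not} a union of $r-1$ stars must, by the stability statement, induce a positive (in fact $n^{\Omega(1)}$) number of surviving edges with high probability, contradiction.

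The hard part will be the supersaturation/stability input for $\KG^r_{n,k}$ in the full range $n\ge r(k+\tfrac12)$: Frankl's theorem gives the extremal number and the extremal configurations only for $n\ge(2r-1)k-r+1$, which is a strictly stronger hypothesis than $n\ge r(k+\tfrac12)$, so for the intermediate range $r(k+\tfrac12)\le n<(2r-1)k-r+1$ one cannot simply cite a known exact result and must instead prove directly --- presumably via a shifting/compression argument or a weighted count --- that families of near-extremal size which are not unions of $r-1$ stars induce quantitatively many edges. Managing this supersaturation bound with an explicit, $k$-uniform constant (so that the final $\zeta=\zeta(r)$ and $C=C(r)$ depend only on $r$) is the main technical obstacle; everything else is a union bound calibrated against it.
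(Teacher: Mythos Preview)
Your Part~II plan has a genuine gap. You propose to show $\mathbb{P}(\alpha>N)\to 1$ by a first-moment argument, computing the expected number $Y$ of independent sets of the form ``$r-1$ stars plus one extra vertex'' and asserting this expectation is $n^{-\Omega(\varepsilon)}\to 0$ when $p\le(1-\varepsilon)p_c$. But the direction is backwards: with $1-p\ge e^{-p-p^2}$ and $p\le(1-\varepsilon)p_c$ one obtains $\mathbb{E}[Y]\ge\bigl(\binom{n}{r-1}\binom{n-r+1}{k}\bigr)^{\varepsilon-o(1)}\to\infty$, not $\to 0$. And an expectation tending to infinity does not by itself give $\mathbb{P}(Y>0)\to 1$; a second-moment (Chebyshev/Paley--Zygmund) step is required. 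The paper does exactly this: it shows $\mathbb{E}[Y]\to\infty$ and then proves $\mathrm{Var}[Y]=o(\mathbb{E}[Y]^2)$ by bounding the second moment over pairs $(A,Q),(B,T)$, splitting according to whether $Q=T$ or $Q\neq T$. Your ``but wait'' paragraph correctly identifies that the goal is $\alpha>N$, but the fix you propose --- ``apply the first-moment bound to the complement event'' --- does not make sense: there is no first-moment route to a \emph{lower} bound on $\alpha$.

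For Part~I your high-level picture (supersaturation for non-star families, then a union bound weighted by $(1-p)^{|E(\KG^r_{n,k}[\A])|}$) matches the paper's strategy, but two specifics are off. First, the restriction $k\le Cn^{1/7}$ does not arise from balancing $\log\binom{\binom nk}{t}$ against the exponent as you suggest; it is the hypothesis of the Das--Gan--Sudakov theorem (Theorem~B in the paper), which says that the initial lexicographic segment $\mathcal{L}_{n,k}(s)$ minimises $|E(\KG^q_{n,k}[\A])|$ only under $n\gtrsim k^7$ (or $n\gtrsim k^3$ for $q=2$). That theorem, not Frankl's extremal result, is the structural input --- and it renders your final paragraph's concern about the range $r(k+\tfrac12)\le n<(2r-1)k-r+1$ moot, since once $k\le Cn^{1/7}$ that range is empty for large $n$. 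Second, the paper does not run a ``remove one vertex at a time'' argument; it partitions the non-trivial candidates $\C$ into three regimes ($\C_1$: the $(r{-}1)$st largest star-intersection is below $N/(2r^2k)$; $\C_2$: total deviation $\sum z_i\ge N/(4r^2)$; $\C_3$: small deviation), proves a tailored edge lower bound in each, and invokes Das--Gan--Sudakov only for $\C_2$. Your proposal does not isolate this decomposition, and without it a single uniform supersaturation bound over all non-star $\A$ of size $N$ is not available.
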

This theorem generalizes Theorem~\ref{bela1} to the case of Kneser hypergraphs. 
As stated above (see the discussion after Theorem~\ref{bela1}), owing to the works~\cite{MR3482268,MR3504983,MR3513857}, 
Theorem~\ref{bela1} has been extended to $k$ as large as  ${n\over 2}-2$. 
We believe that the condition on $k$ in Theorem~\ref{maintheorem} is superfluous as well. 
By the way, we conjecture that the same formula for the critical threshold continues to work for $r\geq 3$ and $n> r(k+{1\over 2})$, but we are unable to prove this presently. 
Also, for $1\leq n-rk \leq {r\over 2}$, it is interesting to study the behavior of $\alpha(\KG^r_{n,k}(p))$. Note that the case $r=2$ is already addressed by the aforementioned result by Delvin and Kahn~\cite{MR3513857}. Indeed, for $1\leq n-rk \leq {r\over 2}$, we surmise that there is a constant $p<1$ such that, with high probability,    
$\alpha(\KG^r_{n,k}(p))$ is equal to  $\max \left\{{rk-1\choose k},{n\choose k}-{n-r+1\choose k}\right\}$ and the only maximum independent sets are the trivial ones.

\bigskip
The rest of the paper is organized as follows. Section~\ref{proofsec} is devoted to the proof of Theorem~\ref{maintheorem} which is divided into three subsections.
In the first subsection, we set up some notations, then the proof of the first and the second parts of the theorem will be discussed separately.
\section{Proof of Theorem~\ref{maintheorem}}\label{proofsec}
\subsection{Notation}
For two functions $f(n)$ and $g(n)$, we write $f\sim g$ and $f=o(g)$ whenever $\lim\limits_{n\rightarrow \infty}{f\over g}=1$ and $\lim\limits_{n\rightarrow \infty}{f\over g}=0$,
respectively. 
For simplicity of notation, we set $V={n\choose k}$, $M=\prod\limits_{i=1}^{r-1}{n-ik-r+i\choose k-1}$, $N={n\choose k}-{n-r+1\choose k}$, $N_i={n-i\choose k-1}$, and 
$H={n-1\choose k-1}-{n-k-1\choose k-1}$. Note that 
$N=N_1+\cdots+N_{r-1}$, 
 $H\leq k{n-2\choose k-2}$, and 
$$(r-1){n-r+1\choose k-1}\leq N\leq (r-1){n-1\choose k-1}.$$ 
Let us remind that $r\geq 2$ is a fixed positive integer and $k\leq C n^{1\over 7}$ {\rm(}$k=o(n^{1\over 3})$ for $r=2,3${\rm)}. 
 Accordingly, we have 
$N\sim(r-1)N_{i}$ and $H=o({N_i\over k})$ for each $i\in[r-1]$. 
Moreover, $M\sim{N^{r-1}\over(r-1)^{r-1}}$ which implies 
$$p_c={\ln\left({n\choose r-1}{n-r+1\choose k}\right)\over \prod\limits_{i=1}^{r-1}{n-ik-r+i\choose k-1}}\sim  {(r-1)^{r-1}\ln\left({n\choose r-1}{n-r+1\choose k}\right)\over N^{r-1}}.$$

\subsection{\bf Proof of Theorem~\ref{maintheorem}: Part I}\label{PartI}
For the ease of reading and  without loss of generality, 
we can suppose  that $p> {\zeta\ln\left({n\choose r-1}{n-r+1\choose k}\right)\over N^{r-1}}$ and $k\leq C n^{1\over 7}$ ($k=o(n^{1\over 3})$ for $r=2,3$) for some suitable fixed 
$\zeta$ and $C$ which will be determined during the proof.  Set
$$\C=\left\{\A\subseteq {[n]\choose k}\colon  |\A|=N \mbox{ and }\A\mbox{  is not the union of any $r-1$ stars}\right\}.$$ 
Suppose that  $\A$ is an independent set of $\KG^r_{n,k}(p)$ with size $N+1$. Since there is an  $\A'\subset \A$ such that $\A'\in\C$ and $|\A'|=N$, the event that 
$\alpha(\KG^r_{n,k}(p))\geq N+1$ is a subset of the event that some member of $\C$ is an independent set of 
$\KG^r_{n,k}(p)$. 
Therefore, to prove the first part of Theorem~\ref{maintheorem}, it suffices to show that with high probability no member of $\C$ is an independent set of 
$\KG^r_{n,k}(p)$ which will be clearly done if we prove   
\begin{equation}\label{mainequation}
\sum\limits_{\A\in \C}\P\left(\A\mbox{ is an independent set of }\KG^r_{n,k}(p)\right)=o(1).
\end{equation}
 Let $\I^r_{n,k}(p)$  denote the collection of independent sets of $\KG^r_{n,k}(p)$.
For each $\A\in\C$ and $x\in[n]$, define $\A_x=\A\cap \S_x$. 
Moreover, consider fixed (with respect to $\A$) distinct elements $x_1,\ldots,x_n\in[n]$ such that 
$$|\A_{x_1}|\geq\cdots\geq |\A_{x_n}|.$$ 
Throughout the paper, we will refer to these $x_i$'s several times. For an $\A$, 
if there is more than one  choice  for $(x_1,\ldots,x_n)$, we choose one of them arbitrarily and fix it for the rest of the paper.
Now, for each $i\in[r-1]$, set $z_i=N_i- |\A_{x_i}\setminus \bigcup_{j=1}^{i-1}\A_{x_j}|$. 
Note that $\sum\limits_{i=1}^{r-1}z_i=|\A\setminus\bigcup\limits_{i=1}^{r-1}\A_{x_i}|$. 
Define 
$$
\C_1=\left\{\A\in \C\colon |\A_{x_{r-1}}|< {1\over 2r^2 k}N\right\},\qquad\qquad\C_2=\left\{\A\in \C\setminus \C_1\colon \sum\limits_{i=1}^{r-1}z_i\geq {N\over 4r^2}\right\},$$
and $$\C_3=\left\{\A\in \C\setminus \C_1\colon \sum\limits_{i=1}^{r-1}z_i<  {N\over 4r^2}\right\}.
$$
To prove Equation~(\ref{mainequation}), we will show that for each $\ell\in\{1,2,3\}$, 
\begin{equation}\label{eq2}
\sum\limits_{\A\in \C_{\ell}}\P\left(\A\in \I^r_{n,k}(p)\right)=o(1).
\end{equation} 
The rest of our discussion in this subsection is devoted to the proof of Equation~(\ref{eq2}), which will be done separately for each $\ell\in\{1,2,3\}$.

\subsection*{Proof of Equation~\ref{eq2} when $\ell=1$}
We here first need to estimate the minimum number of edges of $\KG^r_{n,k}[\A]$
when $\A\in  \C_1$. 
\begin{lemma}\label{edgelower1}
There is a constant $\eta_1=\eta_1(r)$ such that for any $\A\in  \C_1$, 
$$|E(\KG^r_{n,k}[\A])|\geq \eta_1N^r.$$ 
\end{lemma}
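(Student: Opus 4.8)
The plan is to bound from below the number of pairwise-disjoint $r$-tuples that $\A$ must contain when $\A\in\C_1$, exploiting the fact that all of the stars $\S_{x_1},\dots,\S_{x_{r-1}}$ are "small" (each has fewer than $\frac{1}{2r^2k}N$ vertices of $\A$), so that $\A$ is spread out over many elements of $[n]$. First I would observe that, since $|\A|=N$ and $N\sim(r-1)N_i$ with $N_i={n-i\choose k-1}=\Theta(n^{k-1}/(k-1)!)$, while each star contributes at most $N_1={n-1\choose k-1}$ vertices, the set $\A$ cannot be concentrated on a bounded number of coordinates. The key quantitative handle is that the degree condition $|\A_{x_{r-1}}|<\frac{1}{2r^2k}N$, combined with the ordering $|\A_{x_1}|\ge\cdots\ge|\A_{x_n}|$, forces \emph{every} coordinate $x$ to satisfy $|\A_x|<\frac{1}{2r^2k}N$ except possibly $x_1,\dots,x_{r-2}$, and even those first few are bounded by $N_1=o(N/k)\cdot k = o(N)$... more precisely $|\A_{x_i}|\le N_1$ and $\sum_x|\A_x|=kN$, so the "mass" $kN$ is distributed with all but $r-2$ coordinates having mass $<\frac{N}{2r^2k}$.

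Next I would set up the counting. For a $k$-set $A\in\A$, call a coordinate $x\in A$ \emph{heavy} if $|\A_x|\ge\frac{1}{2r^2k}N$; by the above there are at most $r-2<r-1$ heavy coordinates in total, so at most $r-2$ sets of $\A$ need worry about them in the sense that... no — better: I would pass to the subfamily $\A'\subseteq\A$ of sets containing no heavy coordinate. Since each heavy coordinate lies in at most $N_1$ sets and there are at most $r-2$ of them, $|\A\setminus\A'|\le (r-2)N_1\le \frac{r-2}{r-1}N+o(N)$, which is \emph{not} quite enough to keep $\A'$ of size $\Omega(N)$. So instead I would argue directly: greedily build pairwise-disjoint $r$-tuples. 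Having chosen $t$ disjoint $r$-tuples, they occupy $rk\cdot t$ coordinates; the number of sets of $\A$ meeting this occupied region is at most $rkt$ times the maximum degree, i.e. at most $rkt\cdot\max(N_1, \frac{N}{2r^2k})$. Here I need a uniform degree bound — and the point of $\C_1$ is precisely that after discarding the $\le r-2$ exceptional coordinates, every remaining coordinate has degree $<\frac{N}{2r^2k}$; handle the exceptional coordinates by noting a single $k$-set contains at most $k$ of them, so remove the $\le (r-2)N_1$ offending sets first. Actually the cleanest route: among the $\le n$ coordinates, all but $r-2$ have $\A$-degree $<\frac{N}{2r^2k}$, so in the family $\A''=\{A\in\A: A\cap\{x_1,\dots,x_{r-2}\}=\emptyset\}$ every coordinate has degree $<\frac{N}{2r^2k}$ and $|\A''|\ge N-(r-2)N_1$. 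This last bound is still potentially negative; the fix is that $|\A_{x_i}|$ for $i\le r-2$ is itself controlled — but only by $N_1$, not better, in $\C_1$.

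I expect the main obstacle to be exactly this tension: the definition of $\C_1$ only controls the $(r-1)$-st largest star, so the top $r-2$ stars could in principle be full ($\approx N_1$ each), making $\bigcup_{i=1}^{r-2}\S_{x_i}$ absorb almost all of $\A$ and leaving no room for a disjoint-matching lower bound of order $N^r$. The resolution I would pursue is a \emph{sunflower/shifting} argument: if $\A_{x_1},\dots,\A_{x_{r-2}}$ are all large, pick one set $B_i\in\A_{x_i}$ for each $i$, disjoint from each other and from a prospective family, and then within $\A_{x_{r-1}}\cup\cdots$ (the small part, of size $\ge N-(r-2)N_1$... ) — but if that part is tiny this fails. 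Therefore the honest plan must split on whether $\sum_{i\le r-2}|\A_{x_i}|$ is close to $(r-2)N_1$. If it is \emph{not} close, then the complementary part of $\A$ has size $\Theta(N)$ with all degrees $O(N/k)$ and a greedy matching yields $\eta N^r$ edges by the computation: the number of $r$-tuples killed after picking $t$ of them is $O(kt\cdot N/k\cdot N^{r-2})=O(tN^{r-1})$, so we can continue for $t=\Theta(N)$ steps, each step having $\gtrsim (N/r)^{r-1}/\binom{\cdots}{}$... the count of available disjoint $r$-tuples stays $\Omega(N^{r-1})$ for $\Omega(N)$ steps. If $\sum_{i\le r-2}|\A_{x_i}|$ \emph{is} close to $(r-2)N_1$, then $\A$ is nearly the union of $r-2$ stars plus a leftover $\L$ of size $\approx N_1$; since $\A$ is \emph{not} a union of $r-1$ stars, $\L$ is not contained in a single star, so $\L$ itself is an Erdős–Ko–Rado-violating family of size $\gtrsim N_1=\Theta(N)$ with bounded-but-small max degree, and a classical bound (Frankl-type, or just the disjointness count in $\KG^r_{n,k}$ for $n\ge r(k+\frac12)$) gives $\Omega(N^r)$ edges. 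I would prefer to unify these by choosing the split threshold so that in both regimes one has a subfamily $\A^*\subseteq\A$ with $|\A^*|\ge c_r N$ and $\max_x|\A^*_x|\le \frac{N}{2r^2k}$, from which the greedy matching argument above delivers $|E(\KG^r_{n,k}[\A])|\ge|E(\KG^r_{n,k}[\A^*])|\ge\eta_1 N^r$ outright, with $\eta_1=\eta_1(r)$ absorbing all the constants.
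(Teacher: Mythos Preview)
Your approach is essentially the paper's: pass to the subfamily $\A'=\A\setminus\bigcup_{j=1}^{r-2}\A_{x_j}$ (the sets avoiding the at most $r-2$ possibly heavy coordinates), observe that every coordinate then has $\A'$-degree below $\frac{N}{2r^2k}$, and count pairwise-disjoint $r$-tuples greedily. The only gap is that you talk yourself out of the correct bound on $|\A'|$.

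Your worry that $N-(r-2)N_1$ ``is still potentially negative'' (and hence that $|\A'|$ might fail to be $\Omega(N)$) is a miscalculation. Since $N_i\sim N_1$ for each $i\le r-1$ in this regime, already your own union bound gives $|\A'|\ge N-(r-2)N_1=N_1\bigl(1-o(1)\bigr)=\Theta(N)$. The paper obtains the same conclusion more cleanly by using the exact identity $\bigl|\bigcup_{j=1}^{r-2}\S_{x_j}\bigr|=N_1+\cdots+N_{r-2}$, so that $|\A'|\ge N-(N_1+\cdots+N_{r-2})=N_{r-1}=\bigl(\tfrac{1}{r-1}-o(1)\bigr)N$. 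Either way, each $A\in\A'$ meets at most $k\cdot\frac{N}{2r^2k}=\frac{N}{2r^2}$ other members of $\A'$, and the greedy count gives
\[
|E(\KG^r_{n,k}[\A])|\ \ge\ |E(\KG^r_{n,k}[\A'])|\ \ge\ \frac{1}{r!}\prod_{i=0}^{r-1}\Bigl(|\A'|-\frac{iN}{2r^2}\Bigr)\ \ge\ \frac{1}{r!}\Bigl(\frac{1}{2r}-o(1)\Bigr)^rN^r\ \ge\ \eta_1 N^r
\]
for a suitable constant $\eta_1=\eta_1(r)$. Your proposed case-split on whether $\sum_{i\le r-2}|\A_{x_i}|$ is close to $(r-2)N_1$, and the sunflower/shifting detour, are unnecessary.
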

\begin{proof}
Let $\A\in \C_1$. According to the definition of $\C_1$, we have 
$|\A_{x_{r-1}}|< {N\over 2r^2k}$. 
Set $\A'=\A\setminus \bigcup\limits_{j=1}^{r-2}\A_{x_j}$. Note that $|\A'|\geq N_{r-1}=({1\over r-1}-o(1))N$; moreover, 
each $A\in \A'$ intersects at most $k{N\over 2r^2k}$ elements in $\A'$. 
This observation concludes in    
$$\begin{array}{lll}
|E(\KG^r_{n,k}[\A'])| & \geq & {1\over r!} \prod\limits_{i=0}^{r-1}\left(|\A'|-ik{N\over 2r^2k}\right)\\ \\
			       & \geq & {1\over r!} \left(|\A'|-{N\over 2r}\right)^{r}\\ \\ 
			       & \geq & {1\over r!} \left({1\over 2r}-o(1)\right)^rN^r\\\ \\
			       & \geq & \eta_1 N^r,
\end{array}$$
for some appropriate $\eta_1$, as desired.
  \end{proof}
By using Lemma~\ref{edgelower1}, we thus have 
$$\begin{array}{lll}
\sum\limits_{\A\in \C_1}\P\left(\A\in \I^r_{n,k}(p)\right) & \leq & |\C_1|(1-p)^{\eta_1 N^r}\\ \\
  										     & \leq & {V\choose N}e^{-p \eta_1 N^r}\\ \\
										     & \leq  & {V\choose (r-1)N_1}e^{-p \eta_1 N^r}\\ \\
										     & \leq  & \exp\left\{-p \eta_1 N^{r}+(r-1)N_1\ln{ne\over (r-1)k} \right\}\\ \\
										     & \leq  & \exp\left\{\left(-\zeta \eta_1 \ln\left({n\choose r-1}{n-r+1\choose k}\right) +(1+o(1))\ln{ne\over (r-1)k}\right)N \right\}
										    \rightarrow 0
\end{array}$$ 
provided that $\zeta > {1\over \eta_1}$, which completes the proof of Equation~\ref{eq2} for $\ell=1$. \qed
\subsection*{Proof of Equation~\ref{eq2} when $\ell=2$}
The minimum possible number of edges of $\KG^r_{n,k}[\A]$ when the size of $\A$ is given was studied by Das, Gan, and Sudakov in~\cite{Das2016}.
To state their result precisely, we first need to recall some definitions. 
We consider ${[n]\choose k}$ as a poset equipped with the lexicographical ordering: $A < B$ if $\min(A\Delta B)\in A$.
In other words, in the lexicographical ordering, we prefer sets with smaller elements. Define 
$\mathcal{L}_{n,k}(s)$ to be the set of $s$ first sets in ${[n]\choose k}$ according to the lexicographical ordering.  
\begin{alphtheorem}\cite[Theorems~1.6 and~1.7]{Das2016}\label{Dasthm}
If $n>108k^2(l+k)$ and $1\leq s\leq {n\choose k}-{n-l\choose k}$, then $\mathcal{L}_{n,k}(s)$ minimizes the number  
of edges of $\KG_{n,k}[\A]$ among all sets $\A$ of $s$ sets in ${[n]\choose k}$. 

Also, for $q\geq 3$, there is a positive constant $\eta$ such that if $n>\eta l^2k^5(l^2+k^2)e^{3q}$ and $1\leq s\leq {n\choose k}-{n-l\choose k}$, then $\mathcal{L}_{n,k}(s)$ minimizes 
the number  
of edges of $\KG^q_{n,k}[\A]$ among all sets $\A$ of $s$ sets in ${[n]\choose k}$.
\end{alphtheorem}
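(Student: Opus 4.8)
This is a purely extremal statement (the cited theorem of Das, Gan and Sudakov), so I would attack it by compression, in the spirit of the Kruskal--Katona theorem. Write $e_q(\A)$ for the number of edges of $\KG^q_{n,k}[\A]$, i.e. the number of pairwise disjoint $q$-tuples of sets lying inside $\A$; the claim is $e_q(\A)\ge e_q(\mathcal{L}_{n,k}(s))$ for every $\A\subseteq{[n]\choose k}$ with $|\A|=s$, in the stated ranges of $n$ and $s$. Two easy observations frame the problem. The sets avoiding $\{1,\ldots,l\}$ are exactly the lexicographically last ${n-l\choose k}$ sets, so for $s\le{n\choose k}-{n-l\choose k}$ one has $\mathcal{L}_{n,k}(s)\subseteq\S_1\cup\cdots\cup\S_l$; in particular $e_q(\mathcal{L}_{n,k}(s))=0$ whenever $s\le{n-1\choose k-1}$ (then $\mathcal{L}_{n,k}(s)\subseteq\S_1$) or $q>l$, and in those cases there is nothing to prove. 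The substantial range is $ {n-1\choose k-1}<s\le{n\choose k}-{n-l\choose k}$ with $q\le l$.

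\textbf{Step 1: reduce to left-compressed families.} For $1\le i<j\le n$ let $C_{ij}$ be the usual compression, replacing each $A\in\A$ with $j\in A$, $i\notin A$ and $A-j+i\notin\A$ by $A-j+i$. I would first prove $e_q(C_{ij}\A)\le e_q(\A)$: every disjoint $q$-tuple of $C_{ij}\A$ that is not already a disjoint $q$-tuple of $\A$ must contain a ``newly created'' set $A-j+i$, and since pairwise disjointness forces at most one member of the tuple to contain $i$ and at most one to contain $j$, reinstating $j$ in place of $i$ in those (one or two) affected sets produces a disjoint $q$-tuple of $\A$; one checks that this map is injective and that its image consists of tuples that are \emph{not} edges of $C_{ij}\A$, which gives the inequality. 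Iterating over all pairs, we may assume $\A$ is left-compressed; then its degrees $d_t=|\A\cap\S_t|$ are non-increasing in $t$, and for every $A\in\A$ with $1\notin A$ and $j\in A$ one has $A-j+1\in\A$.

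\textbf{Step 2: lex is optimal among left-compressed families.} I would induct on $n$ (with $l$ and $q$ as parameters ranging over their admissible values; the base cases $l<q$ and $s\le{n-1\choose k-1}$ are trivial as above). Split $\A$ at the element $1$: set $\A^{1}=\{A\setminus\{1\}:1\in A\in\A\}\subseteq{[2,n]\choose k-1}$ and $\A^{0}=\{A\in\A:1\notin A\}\subseteq{[2,n]\choose k}$, both left-compressed on $[2,n]$, with $\partial\A^{0}\subseteq\A^{1}$ by Step~1. Since at most one set of an edge contains $1$,
$$e_q(\A)=e_q(\A^{0})+\sum_{B\in\A^{1}}e_{q-1}\Big(\A^{0}\cap{[2,n]\setminus B\choose k}\Big),$$
and the lexicographic family realizes the extreme split: $\mathcal{L}^{1}$ as large as possible, $\mathcal{L}^{0}$ a lexicographic segment on $[2,n]$. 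The inductive step then splits into: (a) for fixed $|\A^{1}|$ and $|\A^{0}|$, the right-hand side is minimized when $\A^{1}$ and $\A^{0}$ are themselves lexicographic, which follows from the induction hypothesis applied to $e_q(\A^{0})$ and (uniformly over $B$) to the inner $e_{q-1}$ counts, together with Kruskal--Katona-type shadow estimates ensuring that a lexicographic $\A^{0}$ also minimizes each of the inner quantities $|\A^{0}\cap{[2,n]\setminus B\choose k}|$ over lexicographically early $B$ and minimizes $|\partial\A^{0}|$; and (b) a convexity statement in the split: it never increases $e_q$ to hand the element $1$ to one more set, i.e. to move one set out of $\A^{0}$ and one into $\A^{1}$. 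Granting (a) and (b), the compressed minimizer of size $s$ must fill $\S_1$ entirely, then $\S_2$, and so on — that is, it must equal $\mathcal{L}_{n,k}(s)$.

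\textbf{Main obstacle.} The hard part is (b): ruling out a left-compressed minimizer that distributes its sets over several stars rather than concentrating them in the first ones. The change in $e_q$ under such a move is a signed combination of $(q-1)$- and $q$-matching counts whose ``main terms'' have size a product of binomials of the form ${n-ik\choose k}$, while the ``error terms'' count matchings meeting a fixed $k$-set, governed by co-degrees — at most $k{n-2\choose k-2}$, cf.\ the quantity $H$ of the Notation subsection, or more sharply via Kruskal--Katona. Forcing the main terms to dominate through all $q$ levels of the recursion is what pins down the hypotheses: polynomially large $n$ for $q=2$ ($n>108k^2(l+k)$) and an extra exponential factor for larger $q$ ($n>\eta\,l^2k^5(l^2+k^2)e^{3q}$), since each level of the induction costs roughly the co-degree-to-degree ratio. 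A secondary nuisance is the careful bookkeeping needed for the injectivity claim in Step~1 and for the boundary values of $s$ near ${n\choose k}-{n-l\choose k}$, where $\mathcal{L}_{n,k}(s)$ all but exhausts the union of $l$ stars.
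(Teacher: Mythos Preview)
The paper does not prove this statement at all: it is quoted verbatim as a black box from Das, Gan and Sudakov~\cite{Das2016} (hence the \texttt{alphtheorem} environment and the citation in the header) and is used only as an input to Corollary~\ref{DasCor}. There is therefore nothing in the present paper to compare your proposal against.

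For what it is worth, your outline is broadly aligned with the actual approach of~\cite{Das2016}: left-compression does not increase the number of pairwise-disjoint $q$-tuples, and one then argues that among left-compressed families the lexicographic initial segment is optimal. Your Step~1 is correct as stated. Your Step~2, however, is sketchier than you perhaps realise: the ``convexity in the split'' claim~(b) is exactly the heart of the matter, and the one-line justification you offer (``main terms dominate error terms'') hides a substantial calculation. In particular, the inductive decomposition you write down mixes $e_q$ and $e_{q-1}$ counts restricted to varying ground sets, and controlling all of those simultaneously while moving a single set between $\A^{0}$ and $\A^{1}$ is not a routine bookkeeping matter; in~\cite{Das2016} this is where the specific numerical hypotheses on $n$ enter and the bulk of the work lies. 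So your proposal identifies the right skeleton and the right obstacle, but as a self-contained proof it is incomplete at precisely the point you flag as the ``main obstacle''.
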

Although, the next corollary is a simple consequence of this theorem, for the sake of completeness, we prove it here. 
\begin{corollary}\label{DasCor}
Let $q\geq 2$ be a fixed positive integer. 
There are positive constants $\alpha$ and $\beta$ such that  for $n\geq \alpha k^7$ {\rm(}for $q=2$, $n\geq \alpha k^3${\rm)}, we have
$$|E(\KG^q_{n,k}[\A])|\geq \beta m|\A|^{q-1}$$
provided that $|\A|=N_1+\cdots+N_{q-1}+m$, where $1\leq m\leq N_q$. 
\end{corollary}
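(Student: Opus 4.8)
The plan is to deduce Corollary~\ref{DasCor} directly from Theorem~\ref{Dasthm} by analyzing the edge count of the lexicographic extremal family. Since $\mathcal{L}_{n,k}(s)$ minimizes $|E(\KG^q_{n,k}[\A])|$ over all $\A$ of size $s=N_1+\cdots+N_{q-1}+m$ — provided $n$ exceeds the stated polynomial-in-$k$ threshold, which is ensured by taking $n\geq\alpha k^7$ (or $\alpha k^3$ for $q=2$) with $\alpha$ large enough in terms of $q$, noting $q=r$ is fixed so the $e^{3q}$ factor is a constant — it suffices to bound from below the number of edges spanned by the first $s$ sets in lexicographic order. First I would identify the structure of $\mathcal{L}_{n,k}(s)$ for this particular $s$: the first $N_1={n-1\choose k-1}$ sets are exactly the $k$-sets containing $1$ (the star $\S_1$), the next $N_2={n-2\choose k-1}$ sets are those avoiding $1$ but containing $2$, and so on, so that $\mathcal{L}_{n,k}(N_1+\cdots+N_{q-1})$ is the union of the $q-1$ stars $\S_1,\ldots,\S_{q-1}$, and the remaining $m$ sets are the lexicographically first $m$ sets avoiding $\{1,\ldots,q-1\}$, i.e.\ sets whose minimum element is at least $q$ and which are "small" — in particular all of them contain the element $q$ once $m\leq N_q={n-q\choose k-1}$.

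The key step is then to count edges of $\KG^q_{n,k}$ incident to this last block of $m$ vertices. Pick any such set $A_q$ from the last block; I want to count ordered $q$-tuples $(A_1,\ldots,A_{q-1},A_q)$ with $A_i\in\S_i\cap\mathcal{L}_{n,k}(s)$ pairwise disjoint and all disjoint from $A_q$, then divide by an appropriate factor to get unordered edges (and sum over $A_q$). Concretely I would choose $A_1\ni 1$, $A_2\ni 2$, \ldots, $A_{q-1}\ni q-1$ greedily: having fixed $A_q$ and $A_1,\ldots,A_{i-1}$, the number of choices for $A_i\in\S_i$ disjoint from $A_q\cup A_1\cup\cdots\cup A_{i-1}$ is at least ${n-ik-(q-i)\choose k-1}$ (we must contain $i$, avoid at most $ik$ other used elements and at most the $q-i-1$ forbidden labels among $\{i+1,\ldots,q-1\}$, and choosing $A_i$ inside $\mathcal{L}_{n,k}(s)$ is automatic since once it contains $i$ and avoids $1,\ldots,i-1$ it lies in $\S_i\subseteq\mathcal{L}_{n,k}(s)$). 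Here one must also check $A_i$ lands in $\mathcal{L}_{n,k}(s)$, which holds because $\S_i$ is wholly contained in the first $s$ sets. This gives at least $\prod_{i=1}^{q-1}{n-ik-(q-i)\choose k-1}=M$ (the quantity denoted $m$ in the corollary's statement, i.e.\ the product $M$ from the notation subsection — I note the unfortunate clash where $m$ also denotes the small parameter, but the displayed inequality is $|E|\geq\beta M|\A|^{q-1}$ read with $M$) ordered completions for each of the $m$ choices of $A_q$, hence at least $m\cdot M$ edges after accounting for the bounded overcounting factor $q!$ (absorbed into $\beta$).

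Finally I would absorb into $\beta$ the comparison between $|\A|^{q-1}$ and $M$: since $|\A|=N_1+\cdots+N_{q-1}+m\leq N_1+\cdots+N_q$ and the ratios $N_i/N_1=(1-o(1))$ as $k=o(n)$, while $M=\prod_{i=1}^{q-1}{n-ik-(q-i)\choose k-1}$ and $N_1^{q-1}=\left({n-1\choose k-1}\right)^{q-1}$ differ only by the factor $\prod(1-o(1))$, we get $M\geq \gamma |\A|^{q-1}$ for a constant $\gamma=\gamma(q)>0$ once $n$ is large. Wait — that direction is the wrong way for the stated inequality; rereading, the claim $|E|\geq\beta m|\A|^{q-1}$ with the corollary's lowercase $m$ as the \emph{small} parameter is exactly what I produced, since I showed $|E|\geq \frac{1}{q!}m\cdot M$ and $M\geq\gamma|\A|^{q-1}$, giving $\beta=\gamma/q!$. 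The main obstacle I anticipate is the bookkeeping in the greedy count: verifying that at each stage the number of available sets in $\S_i\cap\mathcal{L}_{n,k}(s)$ disjoint from everything chosen so far is genuinely at least ${n-ik-(q-i)\choose k-1}$ — in particular handling the interaction between "must avoid previously used elements" and "must avoid the forbidden small labels" without double-subtracting, and confirming all such $A_i$ indeed lie within the first $s$ sets. Verifying the hypotheses of Theorem~\ref{Dasthm} against $n\geq\alpha k^7$ for all fixed $q$ is routine once $\alpha$ is allowed to depend on $q$.
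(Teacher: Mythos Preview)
Your proposal is correct and follows essentially the same route as the paper's proof: reduce to the lexicographic family via Theorem~\ref{Dasthm}, recognize $\mathcal{L}_{n,k}(s)=\S_1\cup\cdots\cup\S_{q-1}\cup T$ with $T\subseteq\S_q\setminus\bigcup_{i<q}\S_i$ and $|T|=m$, lower-bound the edge count by the greedy product $m\prod_{i=1}^{q-1}{n-ik-q+i\choose k-1}$, and then compare this product to $|\A|^{q-1}$ using $N_i\sim N_1$ and $|\A|\leq qN_1$. Two cosmetic points: the division by $q!$ is unnecessary (each edge you build has a unique member of $T$ and a unique member containing $i$ for each $i\in[q-1]$, so the ordered count already equals the unordered count), and the mid-argument detour about $m$ versus $M$ could simply be deleted --- your final reading, $|E|\geq \tfrac{1}{q!}\,m\cdot M$ with $M\geq\gamma|\A|^{q-1}$, is exactly what the paper does without the extraneous factor.
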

\begin{proof}
Set $s=N_1+\cdots+N_{q-1}+m$. 
In view of Theorem~\ref{Dasthm}, since $|E(\KG^q[\A])|$ will be minimized when $\A$ is the set of $s$  first sets in ${[n]\choose k}$ according to the lexicographical ordering, 
we may assume that $\A=\S_1\cup\cdots\cup\S_{q-1}\cup T$ for some $T\subseteq \S_q\setminus(\bigcup\limits_{i=1}^{q-1}\S_i)$ with $|T|=m$. 
In conclusion, one can verify that 
 $$\begin{array}{lll}
|E(\KG^q_{n,k}[\A])| & \geq & m\prod\limits_{i=1}^{q-1}{n-ik-q+i\choose k-1} \\ \\
    			      & =      & m(1-o(1)){N_1}^{q-1}\\ \\
			      & \geq & m(1-o(1))\left({1\over q}\sum\limits_{i=1}^qN_i\right)^{q-1}\\ \\
			      & \geq & \beta m|\A|^{q-1}  
\end{array}$$
for an appropriate positive constant $\beta$.
\end{proof}
Using this corollary, by the following lemma, we will prove that $\KG^{r}_{n,k}[\A]$ has many edges whenever $\A\in\C_2$. 

\begin{lemma}
There is a positive constant $\eta_2=\eta_2(r)$ such that for each $\A\in \C_2$, we have 
$$|E(\KG^{r}_{n,k}[\A])|\geq \eta_2 {N^r\over k}.$$
\end{lemma}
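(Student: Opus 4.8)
The plan is to fix $\A\in\C_2$ and produce, for many choices of index $i\in[r-1]$, a "large sub-configuration" to which Corollary~\ref{DasCor} can be applied with $q=r$. Recall that for $\A\in\C_2$ we have $|\A_{x_{r-1}}|\ge \frac{N}{2r^2k}$ (since $\A\notin\C_1$) and $\sum_{i=1}^{r-1}z_i\ge \frac{N}{4r^2}$, where $z_i=N_i-|\A_{x_i}\setminus\bigcup_{j<i}\A_{x_j}|$ measures how far the $i$-th star-layer of $\A$ falls short of a full star. The idea is that a large deficiency $z_i$ forces $\A$ to contain many "excess" sets outside the nested stars $\S_{x_1},\dots,\S_{x_{i-1}}$, and these excess sets, together with the (still sizeable) stars, create many edges. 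First I would fix an index $i^\ast\in[r-1]$ achieving $z_{i^\ast}\ge \frac{N}{4r^2(r-1)}=\Omega(N)$; such an index exists by averaging.

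The core of the argument is a lower bound on $|E(\KG^r_{n,k}[\A])|$ in terms of $z_{i^\ast}$. Here is where I would invoke Corollary~\ref{DasCor}. Consider the sub-family $\A'=\A\setminus\bigcup_{j=1}^{i^\ast-1}\A_{x_j}$; it sits inside ${[n]\choose k}$ restricted to the ground set avoiding $x_1,\dots,x_{i^\ast-1}$, i.e.\ inside a copy of ${[n-i^\ast+1]\choose k}$, and its star-layers there are $\A_{x_{i^\ast}}',\A_{x_{i^\ast+1}}',\ldots$ with $|\A_{x_j}'| = |\A_{x_j}\setminus\bigcup_{\ell<j}\A_{x_\ell}|$. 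The key observation is that in this restricted universe $\A'$ has at least $N_1+\cdots+N_{r-i^\ast-1}+m$ elements for an appropriate $m=\Omega(N)$: on the one hand $|\A'|\ge |\A|-(i^\ast-1){n-1\choose k-1}$, but more to the point the layers below the $(r-1)$-st are each smaller than a full star by the $z_j$'s, so after subtracting the full contribution of $r-1-i^\ast$ nested stars the residual count is at least $z_{i^\ast}-(\text{lower order})=\Omega(N)$. Then Corollary~\ref{DasCor} (with $q=r$, which is why we need $n\ge\alpha k^7$, resp.\ $n\ge\alpha k^3$ for $r\le 3$) yields $|E(\KG^r_{n,k}[\A'])|\ge \beta\, m\, |\A'|^{r-1}=\Omega(N)\cdot\Omega(N)^{r-1}=\Omega(N^r)$. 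Since $E(\KG^r_{n,k}[\A'])\subseteq E(\KG^r_{n,k}[\A])$, this already gives $|E(\KG^r_{n,k}[\A])|\ge \eta_2 N^r\ge \eta_2 \frac{N^r}{k}$, and in fact the stated $\frac{N^r}{k}$ only helps.

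The one subtlety I anticipate is the bookkeeping to legitimately reduce to the hypothesis "$|\A|=N_1+\cdots+N_{q-1}+m$ with $1\le m\le N_q$" of Corollary~\ref{DasCor}: after restricting to the smaller ground set the parameters $N_i$ shift ($N_i$ becomes ${n-i^\ast+1-i\choose k-1}$), but since $i^\ast\le r-1$ is a bounded shift and $k=o(n^{1/3})$, all these binomials are still $\sim N_1\sim \frac{N}{r-1}$, so the asymptotic identities in the notation subsection survive. The main obstacle, then, is not conceptual but purely arithmetic: carefully choosing $m$ so that it lies in the admissible window $[1,N_q]$ for the corollary (if the residual excess exceeds $N_q$, one simply truncates $T$ to size exactly $N_q=\Omega(N)$, which still gives $m=\Omega(N)$), and tracking the $o(1)$ error terms through the restriction so that the final constant $\eta_2$ depends only on $r$ (through $\alpha,\beta,r!$ and the explicit fractions $\frac{1}{4r^2(r-1)}$), not on $n$ or $k$. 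Once the edge bound $|E(\KG^r_{n,k}[\A])|\ge \eta_2\frac{N^r}{k}$ is in hand, the $\C_2$ case of Equation~\eqref{eq2} follows exactly as in the $\ell=1$ case: bound $\sum_{\A\in\C_2}\P(\A\in\I^r_{n,k}(p))\le {V\choose N}e^{-p\eta_2 N^r/k}$ and choose $\zeta$ large enough that $\zeta\eta_2/k$ dominates the entropy term $(1+o(1))\ln\frac{ne}{(r-1)k}$ coming from $\ln{V\choose N}$ — here the extra factor $\frac1k$ is exactly compensated because $\ln{V\choose N}=\Theta(N\ln\frac nk)$ and we are comparing against $\frac{pN^r}{k}\gtrsim \zeta\eta_2 \frac{\ln(\cdots)}{k}N$, so a single choice of $\zeta=\zeta(r)$ works.
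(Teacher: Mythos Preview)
Your argument has a genuine gap in the application of Corollary~\ref{DasCor}. You propose to apply the corollary with $q=r$ to the restriction $\A'=\A\setminus\bigcup_{j<i^\ast}\A_{x_j}$ (viewed inside the universe on $n'=n-i^\ast+1$ points), claiming that $|\A'|=N_1'+\cdots+N_{r-1}'+m$ with $m=\Omega(N)$. This fails. Writing $N_j'={n'-j\choose k-1}=N_{i^\ast-1+j}$, one computes
\[
|\A'|=N-\sum_{j<i^\ast}(N_j-z_j)=\sum_{j=i^\ast}^{r-1}N_j+\sum_{j<i^\ast}z_j,
\qquad
\sum_{j=1}^{r-1}N_j'=\sum_{j=i^\ast}^{i^\ast+r-2}N_j,
\]
so that
\[
m \;=\; |\A'|-\sum_{j=1}^{r-1}N_j' \;=\; \sum_{j<i^\ast}z_j \;-\; \sum_{j=r}^{\,i^\ast+r-2}N_j.
\]
Since each $z_j\le N_j$ and all the $N_j$ are asymptotically $\frac{N}{r-1}$, the right-hand side is at most $o(N)$ and is typically negative (e.g.\ whenever $z_j=0$ for all $j<i^\ast$, which is perfectly compatible with $z_{i^\ast}=\Omega(N)$). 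Hence the hypothesis $1\le m\le N_r'$ of the corollary with $q=r$ is not met, and the claimed bound $|E(\KG^r_{n,k}[\A'])|\ge\beta\, m\,|\A'|^{r-1}=\Omega(N^r)$ does not follow. (Your written threshold ``$N_1+\cdots+N_{r-i^\ast-1}$'' would correspond instead to $q=r-i^\ast$; but then the output is a bound on $(r-i^\ast)$-edges, and you have peeled off only $i^\ast-1$ stars to use as extension factors --- one short of what is needed to build $r$-edges.)

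The paper's proof circumvents this by introducing a different index: let $a$ be the largest $i\le r-1$ with $|\A_{x_i}|\ge\frac{N}{r^2}$ (so stars $1,\dots,a$ are ``large'' of size $\Omega(N)$, while stars $a+1,\dots,r-1$ lie between $\frac{N}{2r^2k}$ and $\frac{N}{r^2}$). If $a\ge r-2$, a direct count over $\big(\A\setminus\bigcup_i\A_{x_i}\big)\times\prod_{i}\big(\A_{x_i}\setminus\bigcup_{j\ne i}\A_{x_j}\big)$ already gives $\Omega\!\big(\frac{N^r}{k}\big)$ edges. If $a<r-2$, one removes the first $a+1$ stars, applies Corollary~\ref{DasCor} with $q=r-a-1$ to the remainder $\A'$ (which now genuinely has size $\ge N_{a+1}+\cdots+N_{r-2}+m$ with $m=N_{r-1}-\frac{N}{r^2}=\Omega(N)$, precisely because $|\A_{x_{a+1}}|<\frac{N}{r^2}$), and then extends each resulting $(r-a-1)$-edge by one set from each of the $a+1$ peeled stars. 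The first $a$ extension factors are $\Omega(N)$ and the $(a{+}1)$-st is only $\Omega(N/k)$, which is exactly why the final bound is $\eta_2 N^r/k$ rather than $\eta_2 N^r$. Your stronger target $\Omega(N^r)$ is not what the paper obtains, and your outline does not supply the ``large-vs-small star'' dichotomy (the parameter $a$) needed to control those extension factors.
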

\begin{proof}
Consider distinct elements $x_1,\ldots,x_n\in [n]$ (as is defined fixedly above) such that 
$$|\A_{x_1}|\geq\cdots\geq |\A_{x_n}|.$$
Since  $\A\in \C_2$, we have $|\A_{x_1}|\geq \cdots\geq |\A_{x_{r-1}}|\geq  {N\over 2r^2k}$. 
Let $a\in[r-1]$ be the largest index for which $|\A_{x_a}|\geq {N\over r^2}$ (if there is no such an index, then set $a=0$).
Note that $|\A_x\cap\A_y|\leq |\S_x\cap\S_y|={n-2\choose k-2}=o({N\over k})$ for each $x\neq y\in[n]$. 
Accordingly, for each $i\leq a$, 
$$\left|\A_{x_i}\setminus \bigcup\limits_{j\in[r-1]\setminus\{i\}}\A_{x_j}\right|\geq|\A_{x_i}|- (r-2){n-2\choose k-2} \geq{N\over r^2}-o({N\over k})$$
and for each $a+1\leq i\leq r-1$,  
$$\left|\A_{x_i}\setminus \bigcup\limits_{j\in[r-1]\setminus\{i\}}\A_{x_j}\right|\geq |\A_{x_i}|- (r-2){n-2\choose k-2} \geq{N\over 2r^2k}-o({N\over k}).$$
Note that each $A\not\in \S_x$ is disjoint from all but $H$ elements in $\S_x.$ 
Consequently, if $a\geq r-2$, then 
$$\begin{array}{lll}
E(\KG^r_{n,k}[\A]) &  \geq   &\left|\A\setminus\bigcup\limits_{i=1}^{r-1}\A_{x_i}\right|\times\prod\limits_{i=1}^{r-1}\left(\left|A_{x_i}\setminus \bigcup\limits_{j\in[r-1]\setminus\{i\}}\A_{x_j}\right|-iH\right)\\ \\
			    &  \geq   & (z_1+\cdots+z_{r-1})\left({N\over r^2}-o({N\over k})\right)^{r-2}\left({N\over 2r^2k}-o({N\over k})\right)\\ \\
			    &  \geq   & {N\over 4r^2}\left({N\over r^2}-o({N\over k})\right)^{r-2}\left({N\over 2r^2k}-o({N\over k})\right)\\ \\
			    &   \geq  &  \beta'{N^{r}\over k}
\end{array}$$
for some positive constant $\beta'$(note that $H=o({N\over k})$). 
Henceforth, we assume that $a<r-2$. 
Set $\A'=\A\setminus \bigcup\limits_{i=1}^{a+1} \A_{x_i}$ and $m=N_{r-1}-{N\over r^2}$. 
Note that 
$$|\A'| \geq  N_{a+1}+\cdots+N_{r-2}+m\geq ({1\over r}-o(1))N$$ 
and 
$$m=N_{r-1}-{N\over r^2}=\left({1\over r-1}-o(1)\right)N-{N\over r^2}\geq\left({1\over r}-o(1)\right)N.$$ 
Without loss of generality, we assume that $|\A'| = N_{a+1}+\cdots+N_{r-2}+m$. 
Consequently, in view of Corollary~\ref{DasCor}, there is a constant $\beta$ for which  
$$
\begin{array}{lll}
|E(\KG^{r-a-1}_{n,k}[\A'])| & \geq & \beta\left({1\over r}-o(1)\right)N|\A'|^{r-a-2}\\ \\
	& \geq & \beta \left({1\over r}-o(1)\right)N\left(({1\over r}-o(1))N\right)^{r-a-2} \\ \\
	&  =     & \beta ({1\over r^{r-a-1}}-o(1))N^{r-a-1}.
\end{array}$$ 
Since $H=o({N\over k})$,  
$$\begin{array}{lll}
|E(\KG^{r}_{n,k}[\A])| & \geq  & |E(\KG^{r-a-1}_{n,k}[\A']|\times\prod\limits_{i=1}^{a+1}\left(\left|A_{x_i}\setminus \bigcup\limits_{j\in[a+1]\setminus\{i\}}\A_{x_j}\right|-(r-a-i)H\right)\\ \\
			        &  \geq & \beta ({1\over r^{r-a-1}}-o(1))N^{r-a-1} \left({N\over r^2}-o({N\over k})\right)^{a} \left({N\over 2r^2k}-o({N\over k})\right)\\ \\ 
			        & \geq  &\beta''{N^r\over k}
\end{array}$$
for some positive constant $\beta''$.  Setting $\eta_2=\min\{\beta',\beta''\}$  completes the proof of lemma.
\end{proof}
Now, by use of this lemma, we have 
$$\begin{array}{lll}\sum\limits_{\A\in \C_2}\P\left(\A\in \I^r_{n,k}(p)\right) &\leq& |\C_2|(1-p)^{\eta_2{N^r\over k}}\\
																		   &\leq  & {V\choose N}\exp\{-\eta_2p{N^r\over k}\}\\ \\
																		   &\leq  & {V\choose (r-1)N_1}\exp\{-\eta_2p{N^r\over k}\}\\ \\
																		   &\leq  &\exp\left\{-p \eta_2{N^r\over k}+(r-1)N_1\ln{ne\over (r-1)k} \right\}\\ \\
																		   &\leq  &\exp\left\{\left(-\zeta\eta_2 {1\over k}\ln\left({n\choose r-1}{n-r+1\choose k}\right) +\left(1+o(1)\right)\ln{ne\over (r-1)k}\right)N \right\}\\ \\
																		   &\leq  &\exp\left\{\left(-\zeta\eta_2 \ln\left({n-r+1\over k}\right) +\left(1+o(1)\right)\ln{ne\over (r-1)k}\right)N \right\}\rightarrow 0
\end{array}$$ 
provided that $\zeta>{1\over \eta_2}$. \qed

\subsection*{Proof of Equation~\ref{eq2} when $\ell=3$}
For each $\A\in\C_3$ and each $i\in[r-1]$, we clearly have  $$\left|\A_{x_i}\setminus \bigcup\limits_{j=1}^{i-1}\A_{x_j}\right|= N_i-z_i
\geq\left({1\over r-1}-o(1)\right)N- z_i.$$ 
Consequently,  $$
\begin{array}{lll}
\left|\A_{x_i}\setminus \bigcup\limits_{j\in[r-1]\setminus\{i\}}\A_{x_j}\right| &  \geq & \left|\A_{x_i}\setminus \bigcup\limits_{j=1}^{i-1}\A_{x_j}\right|-\sum\limits_{j=i+1}^{r-1}|\A_{x_i}\cap\A_{x_j}|\\ \\ 
											     & \geq & N_i-z_i -(r-2){n-2\choose k-2}\\ \\ 
											     & \geq  &\left({1\over r-1}-o(1)\right)N- z_i.
\end{array}$$
Accordingly, since $H=o(N)$, for large enough $n$, we have 
$$\begin{array}{lll}
E(\KG^r_{n,k}[\A]) &  \geq & \left|\A\setminus \bigcup\limits_{i=1}^{r-1}\A_{x_i}\right|\times\prod\limits_{i=1}^{r-1}\left(\left|\A_{x_i}\setminus \bigcup\limits_{j\in[r-1]\setminus\{i\}}\A_{x_j}\right|-iH\right)\\ \\
			    &     =   & (z_1+\cdots+z_{r-1})\prod\limits_{i=1}^{r-1}\left(\left|\A_{x_i}\setminus \bigcup\limits_{j\in[r-1]\setminus\{i\}}\A_{x_j}\right|-iH\right)\\ \\
			    &      \geq  &  (z_1+\cdots+z_{r-1})\prod\limits_{i=1}^{r-1}({N\over r}-z_i).
\end{array}$$
Hence, if we define $f(z_1,\ldots,z_{r-1})=(z_1+\cdots+z_{r-1})\prod\limits_{i=1}^{r-1}({N\over r}-z_i)$, then, for large enough $n$,  
$$\begin{array}{lll}\sum\limits_{\A\in \C_3}\P\left(\A\in \I^r_{n,k}(p)\right)& \leq & \sum\limits_{1\leq z_1+\cdots+z_{r-1}\leq c N}{n\choose r-1}{N_1\choose z_1}\cdots{N_{r-1}\choose z_{r-1}}{V\choose z_1+\cdots+z_{r-1}}    (1-p)^{f(z_1,\ldots,z_{r-1})}\\ \\
& \leq & \sum\limits_{1\leq z_1+\cdots+z_{r-1}\leq c N}{n\choose r-1}{N_1\choose z_1}\cdots{N_{r-1}\choose z_{r-1}}{V\choose z_1+\cdots+z_{r-1}}    e^{-p f(z_1,\ldots,z_{r-1})}.
\end{array}$$
Now, we set $$g(z_1,\ldots,z_{r-1})={n\choose r-1}{N_1\choose z_1}\cdots{N_{r-1}\choose z_{r-1}}{V\choose z_1+\cdots+z_{r-1}}    e^{-p f(z_1,\ldots,z_{r-1})}.$$ 
It is simple to check that there is a constant $\zeta_0$ such that for $\zeta>\zeta_0$, 
if $\sum\limits_{i=1}^{r-1}z_i\geq 2$, then  for each $z_i\geq 1$, 
$$\begin{array}{lll}
{g(z_1,\ldots,z_i,\ldots,z_{r-1})\over g(z_1,\ldots,z_i-1,\ldots,z_{r-1})} &= &{{N\choose z_i}{V\choose z_1+\cdots+z_{r-1}}\over {N\choose z_i-1}{V\choose z_1+\cdots+z_{r-1}-1}}e^{-p \big(f(z_1,\ldots,z_{r-1})-f(z_1,\ldots,z_i-1,\ldots,z_{r-1})\big)}=o(1).
\end{array}$$
Therefore, for sufficiently large $n$,  we have 
$${g(z_1,\ldots,z_i,\ldots,z_{r-1})\over g(z_1,\ldots,z_i-1,\ldots,z_{r-1})}<1$$ 
which clearly concludes in $$g(z_1,\ldots,z_{r-1})\leq g(1,0,\ldots,0).$$ 
This implies that there is a constant $c=c(r)$ for which 
$$\begin{array}{lll}
\sum\limits_{\A\in \C_3}\P\left(\A\in \I^r_{n,k}(p))\right) & \leq & \sum\limits_{1\leq z_1+\cdots+z_{r-1}\leq {N\over 4r^2}} g(1,0,\ldots,0)\\ \\
													  & \leq & \sum\limits_{1\leq z_1+\cdots+z_{r-1}\leq {N\over 4r^2}}  {n\choose{r-1}}N_1Ve^{-p cN^{r-1}}\\ \\
													  & \leq & {n\choose r-1}N^{r}Ve^{-p cN^{r-1}}\\ \\
													  &   =   & \exp\left\{-p cN^{r-1}+\ln({n\choose{r-1}}N^rV)\right\}\\ \\
													  & \leq & \exp\left\{-\zeta c\ln\left({n\choose r-1}{n-r+1\choose k}\right) +\ln({n\choose r-1}N^rV)\right\}\\ \\
													  &   =   & {{n\choose r-1}N^rV \over \left({n\choose r-1}{n-r+1\choose k}\right)^{ c\zeta}}\rightarrow 0
\end{array}$$
provided that $\zeta >{r+1\over c}$. \qed

\bigskip
We are now ready to finish the proof of Theorem~\ref{maintheorem}: Part I. 
\begin{proof}[Completing the proof of Theorem~\ref{maintheorem}: Part I] 
In conclusion, if we set $\zeta >\max\{\zeta_0, {1\over \eta_1},{1\over \eta_2},{r+1\over c}\}$, then for all $\ell\in\{1,2,3\}$, we simultaneously have 
$$
\sum\limits_{\A\in \C_{\ell}}\P\left(\A\in \I^r_{n,k}(p)\right)=o(1)
$$
finishing the proof. 
\end{proof}

\subsection{\bf Proof of Theorem~\ref{maintheorem}: Part II}\label{PartII}
It should be noticed that our proof is similar to that of the second part of Theorem~\ref{bela1} in~\cite{MR3403515}.
Let $ p\leq(1-\varepsilon)p_c$ for some constant $\varepsilon\in(0,1]$. Here we prove that 
$$\mathbb{P}\left(\alpha\left(\KG^r_{n,k}(p)\right)\leq{n\choose k}-{n-r+1\choose k}\right)=o(1).$$
Let $Y$ denote the random variable counting the number of  pairs $(A,Q)$ such that $Q\in{[n]\choose r-1}$, $A\not\in \S_Q=\bigcup\limits_{x\in Q}\S_x$, and $E(\KG^r_{n,k}(p)[\S_Q\cup\{A\}])=\varnothing$. Clearly, to prove the desired assertion, it suffices to show that $\P(Y>0)$ goes to $1$ as $n$ tends to infinity. 
Let us remind that $M=\prod\limits_{i=1}^{r-1}{n-ik-1\choose k-1}$.
It is easy to check  that 
$$\begin{array}{lll}
\mathbb{E}[Y]&=&{n \choose r-1}{n-r+1 \choose k}(1-p)^M\\ \\
 &\geq & {n \choose r-1}{n-r+1 \choose k}\exp(-(p+p^2)M)\\ \\
 &\geq & {n \choose r-1}{n-r+1 \choose k}\exp\left\{-(1+p)(1-\varepsilon)\ln\left({n \choose r-1}{n-r+1 \choose k}\right)\right\}\\ \\
 &\geq & ({n \choose r-1}{n-r+1 \choose k})^{\varepsilon-p+\varepsilon p}.
  \end{array}
$$
Therefore, $\mathbb{E}[Y]\to \infty$ when $p\leq (1-\varepsilon)p_c.$
Hence, by using the classical second moment technique, to prove that $\P(Y>0)\rightarrow 1$, it is  suffices to show that $\mathrm{Var}[Y]=o(\mathbb{E}[Y]^2).$ 
Let $Y'$ denote the random variable counting the number of  $4$-tuples $(A,B,Q,T)$ with 
$Q, T\in {[n]\choose r-1}$, $A\in{[n]\choose k} \setminus\S_{Q}$  and $B\in{[n]\choose k} \setminus\S_{Q}$ such that $(A,Q)\neq (B,T)$
and $$E\left(\KG^r_{n,k}(p)[\S_Q\cup\{A\}]\right)=E\left(\KG^r_{n,k}(p)[\S_T\cup\{B\}]\right)=\varnothing.$$
Clearly, 
$$\mathbb{E}[Y']=\sum\mathbb{P}\left(\S_Q\cup\{A\},\S_T\cup\{B\} \in\I^r_{n,k}\right),$$
where the summation is taken over all  ordered $4$-tuples $(A,B,Q,T)$ with  
$Q, T\in {[n]\choose r-1}$, $A\in{[n]\choose k} \setminus\S_{Q}$, $B\in{[n]\choose k} \setminus\S_{Q}$, and $(A,Q)\neq (B,T)$. 
Now, one can verified that 
$$\begin{array}{lll}
\sum\limits_{Q\neq T}\mathbb{P}\left(\S_Q\cup\{A\},\S_T\cup\{B\} \in\I^r_{n,k}\right)&\leq& {n \choose r-1}^2{n-r+1 \choose k}^{2} (1-p)^{2M-{ O(N^{r-2})}}\\ \\
&=&(1+o(1))\mathbb{E}[Y]^2
  \end{array}$$
and 
$$\begin{array}{lll}
\sum\limits_{Q= T, A\neq B}\mathbb{P}\left(\S_Q\cup\{A\},\S_T\cup\{B\} \in\I^r_{n,k}\right)&\leq& {n \choose r-1}{n-r+1 \choose k}^2 (1-p)^{2M}\\ \\
&=&o(\mathbb{E}[Y]^2).
  \end{array}$$
Note that $$\mathrm{Var}[Y]=\mathbb{E}[Y^2]-\mathbb{E}[Y]^2=\mathbb{E}[Y]+\mathbb{E}[Y']-\mathbb{E}[Y]^2.$$ 
Hence, $\mathrm{Var}[Y]=\mathbb{E}[Y]+o(\mathbb{E}[Y^2])=o(\mathbb{E}[Y^2])$, as desired. \qed
\section*{Acknowledgments}
The research of Meysam Alishahi was in part supported by a grant from IPM (No. 96050013).

\end{document}